\numberwithin{equation}{section}
\theoremstyle{plain}
\newtheorem{thm}{Theorem}[section]
\newtheorem{lem}{Lemma}[section]
\newtheorem{open}{Open Problem}[section]
\newtheorem{prop}{Proposition}[section]
\theoremstyle{plain}
\newtheorem{defn}{Definition}[section]
\theoremstyle{remark}
\newtheorem{rem}{Remark}[section]
\DeclareMathOperator{\td}{d}
\newcommand{\cmdeg}[1]{\sideset{}{{}_\mathrm{cm}^{#1}}\deg}
\begin{document}

\title[Completely monotonic degree of a function]
{Completely monotonic degree of a function involving the tri- and tetra-gamma functions}

\author[F. Qi]{Feng Qi}
\address{Department of Mathematics, College of Science, Tianjin Polytechnic University, Tianjin City, 300387, China; Institute of Mathematics, Henan Polytechnic University, Jiaozuo City, Henan Province, 454010, China}
\email{\href{mailto: F. Qi <qifeng618@gmail.com>}{qifeng618@gmail.com}, \href{mailto: F. Qi <qifeng618@hotmail.com>}{qifeng618@hotmail.com}, \href{mailto: F. Qi <qifeng618@qq.com>}{qifeng618@qq.com}}
\urladdr{\url{http://qifeng618.wordpress.com}}

\begin{abstract}
Let $\psi(x)$ be the di-gamma function, the logarithmic derivative of the classical Euler's gamma function $\Gamma(x)$.
In the paper, the author shows that the completely monotonic degree of the function $[\psi'(x)]^2+\psi''(x)$ is $4$, surveys the history and motivation of the topic, supplies a proof for the claim that a function $f(x)$ is strongly completely monotonic if and only if the function $xf(x)$ is completely monotonic, conjectures the completely monotonic degree of a function involving $[\psi'(x)]^2+\psi''(x)$, presents the logarithmic concavity and monotonicity of an elementary function, and poses an open problem on convolution of logarithmically concave functions.
\end{abstract}

\keywords{completely monotonic degree; completely monotonic function; gamma function; tri-gamma function; tetra-gamma function; history; conjecture; strongly completely monotonic function; logarithmically concave function; monotonicity; convolution; open problem}

\subjclass[2010]{Primary 33B15; Secondary 26A12, 26A48, 26A51, 42A85, 44A10, 44A35}

\thanks{This paper was typeset using \AmS-\LaTeX}

\maketitle

\section{Introduction}

A function $f$ is said to be completely monotonic on an interval $I$ if $f$ has derivatives of all
orders on $I$ and
$
0\le(-1)^{k-1}f^{(k-1)}(x)<\infty
$
for $x\in I$ and $k\in\mathbb{N}$, where $f^{(0)}(x)$ means $f(x)$ and $\mathbb{N}$ is the set of all positive integers. See~~\cite[Chapter~XIII]{mpf-1993},~\cite[Chapter~1]{Schilling-Song-Vondracek-2nd}, and~\cite[Chapter~IV]{widder}.
\begin{defn}
Let $f(x)$ be a completely monotonic function on $(0,\infty)$ and denote $f(\infty)=\lim_{x\to\infty}f(x)$. If for some $r\in\mathbb{R}$ the function $x^r[f(x)-f(\infty)]$ is completely monotonic on $(0,\infty)$ but $x^{r+\varepsilon}[f(x)-f(\infty)]$ is not for any positive number $\varepsilon>0$, then we say that the number $r$ is the completely monotonic degree of $f(x)$ with respect to $x\in(0,\infty)$; if for all $r\in\mathbb{R}$ each and every $x^r[f(x)-f(\infty)]$ is completely monotonic on $(0,\infty)$, then we say that the completely monotonic degree of $f(x)$ with respect to $x\in(0,\infty)$ is $\infty$.
\end{defn}
For convenience, we use the notation $\cmdeg{x}[f(x)]$ to denote the completely monotonic degree $r$ of $f(x)$ with respect to $x\in(0,\infty)$. It is clear that the completely monotonic degree $\cmdeg{x}[f(x)]$ of any completely monotonic function $f(x)$ on $(0,\infty)$ is at leat $0$.
It was proved in~\cite[Section~1.5]{Bessel-ineq-Dgree-CM.tex} that the completely monotonic degree $\cmdeg{x}[f(x)]$ equals $\infty$ if and only if $f(x)$ is nonnegative and identically constant. This definition slightly modifies the corresponding one stated in~\cite{psi-proper-fraction-degree-two.tex, Bessel-ineq-Dgree-CM.tex, simp-exp-degree-ext.tex} and related references therein. For simplicity, in what follows, we sometimes just say that $\cmdeg{x}[f(x)]$ is the completely monotonic degree of $f(x)$.
\par
The classical Euler's gamma function $\Gamma(x)$ may be defined for $x>0$ by
\begin{equation*}
\Gamma(x)=\int^\infty_0t^{x-1} e^{-t}\td t.
\end{equation*}
The logarithmic derivative of $\Gamma(x)$, denoted by $\psi(x)=\frac{\Gamma'(x)}{\Gamma(x)}$, is called the psi or di-gamma function, the derivatives $\psi'(x)$ and $\psi''(x)$ are respectively called the tri- and tetra-gamma functions. As a whole, hereafter, the derivatives $\psi^{(i)}(x)$ for $i\ge0$ are called polygamma functions.
\par
The purpose of this paper is to compute the completely monotonic degree of the function
\begin{equation}\label{psi-der-square-psi-2der}
\Psi(x)=[\psi'(x)]^2+\psi''(x)
\end{equation}
with respect to $x\in(0,\infty)$. We may state our main result as the following theorem.

\begin{thm}\label{x-4-further-thm}
The completely monotonic degree of $\Psi(x)$ defined by~\eqref{psi-der-square-psi-2der} with respect to $x\in(0,\infty)$ is $4$, that is,
\begin{equation}\label{degree=4}
\cmdeg{x}[\Psi(x)]=4.
\end{equation}
\end{thm}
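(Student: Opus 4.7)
The plan is to establish $\cmdeg{x}[\Psi(x)]=4$ by proving the matching bounds $\cmdeg{x}[\Psi(x)]\le 4$ and $\cmdeg{x}[\Psi(x)]\ge 4$ separately.

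For the upper bound I would start from the classical asymptotic expansions
\[
\psi'(x)\sim\frac{1}{x}+\frac{1}{2x^{2}}+\frac{1}{6x^{3}}-\frac{1}{30x^{5}}+\cdots,
\qquad
\psi''(x)\sim-\frac{1}{x^{2}}-\frac{1}{x^{3}}-\frac{1}{2x^{4}}+\cdots
\]
as $x\to\infty$. Squaring the first expansion and adding the second, the $x^{-2}$ and $x^{-3}$ contributions cancel and the $x^{-4}$ coefficient collapses to $\tfrac{7}{12}-\tfrac{1}{2}=\tfrac{1}{12}$, so
\[
\Psi(x)\sim\frac{1}{12\,x^{4}}\qquad(x\to\infty).
\]
Hence $x^{4+\varepsilon}\Psi(x)\to\infty$ for every $\varepsilon>0$. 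Since every completely monotonic function on $(0,\infty)$ is nonincreasing (its first derivative is nonpositive) and nonnegative, it is bounded above by its limit at $0^{+}$; a function tending to $+\infty$ therefore cannot be completely monotonic, which forces $\cmdeg{x}[\Psi(x)]\le 4$.

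For the lower bound one must show that $x^{4}\Psi(x)$ is itself completely monotonic on $(0,\infty)$. Starting from the Laplace representations
\[
\psi'(x)=\int_{0}^{\infty}\frac{t e^{-xt}}{1-e^{-t}}\td t,\qquad -\psi''(x)=\int_{0}^{\infty}\frac{t^{2}e^{-xt}}{1-e^{-t}}\td t,
\]
and the convolution theorem for Laplace transforms, one obtains $\Psi(x)=\int_{0}^{\infty}H(u)e^{-xu}\td u$ with
\[
H(u)=\int_{0}^{u}\frac{v(u-v)}{(1-e^{-v})(1-e^{-(u-v)})}\td v-\frac{u^{2}}{1-e^{-u}}.
\]
The Taylor expansion $t/(1-e^{-t})=1+t/2+t^{2}/12+O(t^{4})$ yields, after a short computation, $H(u)=u^{3}/72+O(u^{4})$, so $H(0)=H'(0)=H''(0)=0$ and $H'''(0)=1/12$. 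Applying integration by parts four times therefore gives
\[
x^{4}\Psi(x)=\frac{1}{12}+\int_{0}^{\infty}H^{(4)}(u)\,e^{-xu}\td u,
\]
and Bernstein's theorem reduces $\cmdeg{x}[\Psi(x)]\ge 4$ to the pointwise inequality $H^{(4)}(u)\ge 0$ on $(0,\infty)$.

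The main obstacle is proving this fourth-derivative positivity. To attack it, I would first simplify the convolution via the algebraic identity
\[
\frac{1}{(e^{v}-1)(e^{u-v}-1)}=\frac{1}{e^{u}-1}\left[\frac{1}{e^{v}-1}+\frac{1}{e^{u-v}-1}+1\right]
\]
together with the symmetry $v\leftrightarrow u-v$, which rewrite $H$ as
\[
(1-e^{-u})H(u)=2\int_{0}^{u}\frac{v(u-v)}{e^{v}-1}\td v+\frac{u^{3}}{6}-u^{2}.
\]
From this form one can try to verify $H^{(4)}\ge 0$ either by direct differentiation and careful term-by-term estimation, or by expanding $H$ as a power series in $u$ through the Bernoulli-number generating function and establishing nonnegativity of every Taylor coefficient from $u^{3}$ onwards, then extending positivity to all $u>0$ by analytic continuation or asymptotic estimates. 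This delicate positivity verification is where the bulk of the effort is expected to concentrate.
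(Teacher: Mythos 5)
Your upper bound is correct and complete: from the expansions of $\psi'$ and $\psi''$ one does get $\Psi(x)\sim\frac1{12x^4}$, so $x^{4+\varepsilon}\Psi(x)\to\infty$, and a completely monotonic function, being nonincreasing, cannot tend to $+\infty$. This is in fact a little cleaner than the paper's route, which differentiates $x^\alpha\Psi(x)$ once and shows that the resulting necessary condition $\alpha\le\phi(x)$ forces $\alpha\le\lim_{x\to\infty}\phi(x)=4$. Your setup for the lower bound also coincides exactly with the paper's: the same kernel $H=q$, the same values $q(0)=q'(0)=q''(0)=0$, $q'''(0)=\frac1{12}$, the same fourfold integration by parts, and the same reduction to $H^{(4)}(u)\ge0$ on $(0,\infty)$. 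Your algebraic identity rewriting $(1-e^{-u})H(u)$ in terms of $2\int_0^u\frac{v(u-v)}{e^v-1}\,\td v+\frac{u^3}6-u^2$ is verified correctly.

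The genuine gap is that the inequality $H^{(4)}(u)\ge0$ --- the entire analytic content of the theorem --- is not proved; you only sketch two possible strategies, and neither is viable as stated. The route ``show every Taylor coefficient of $H$ from $u^3$ onwards is nonnegative, then extend by analytic continuation'' has two problems: the series for $v/(e^v-1)$ involves Bernoulli numbers of alternating sign and converges only for $|u|<2\pi$, so coefficient nonnegativity is far from evident (it would amount to absolute monotonicity of $H'''$, much stronger than what is needed); and analytic continuation does not transport positivity beyond the disc of convergence, so $u\ge2\pi$ would remain untreated. ``Direct differentiation and careful term-by-term estimation'' is not an argument until the estimates are exhibited. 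For comparison, the paper's treatment of exactly this step is the bulk of its proof: it writes $q^{(4)}(t)=\int_0^t\sigma''(s)\sigma''(t-s)\,\td s+\frac23\sigma''(t)-2\sigma^{(3)}(t)-t\sigma^{(4)}(t)$, proves that $\sigma''$ is logarithmically concave via a chain of auxiliary functions $h_1,h_2,h_3$, invokes a convolution inequality of Mitrinovi\'c together with a Hermite--Hadamard-type bound to get $\int_0^t\sigma''(s)\sigma''(t-s)\,\td s\ge\frac16t\sigma''(t)$, and finally reduces the problem to the positivity of an explicit exponential polynomial $\vartheta(t)$, checked through its power-series coefficients $Q(k)$. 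Some substitute for this chain of estimates must be supplied before your lower bound can be considered established.
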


For proving Theorem~\ref{x-4-further-thm} in Section~\ref{sec-2-x-4-di-tri-gamma-furth}, we will prepare some lemmas in Section~\ref{lemmas-sec}. In the last section, we shall survey the history and motivation of this topic, conjecture the completely monotonic degree of a function involving $\Psi(x)$, supply a proof for the claim that a function $f(x)$ is strongly completely monotonic if and only if the function $xf(x)$ is completely monotonic, present the logarithmic concavity and monotonicity of an elementary function, and pose an open problem on convolution of logarithmically concave functions.

\section{Lemmas}\label{lemmas-sec}

In order to smoothly prove Theorem~\ref{x-4-further-thm}, we need the following lemmas.

\begin{lem}[{\cite[p.~260, 6.4.1]{abram}}]
For $n\in\mathbb{N}$ and $x>0$,
\begin{equation}\label{psin}
\psi^{(n)}(x)=(-1)^{n+1}\int_{0}^{\infty}\frac{t^{n}}{1-e^{-t}}e^{-xt}\td t.
\end{equation}
\end{lem}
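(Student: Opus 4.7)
The plan is to derive the representation from the Gauss series for the digamma function together with the standard Laplace representation of negative powers. First I would recall the classical expansion
\[
\psi(x) = -\gamma + \sum_{k=0}^{\infty}\left(\frac{1}{k+1} - \frac{1}{x+k}\right),
\]
which converges uniformly on compact subsets of $(0,\infty)$. Differentiating $n$ times term by term---justified because the $n$-th differentiated series converges uniformly on compact subsets of $(0,\infty)$---gives
\[
\psi^{(n)}(x) = (-1)^{n+1}\, n!\sum_{k=0}^{\infty}\frac{1}{(x+k)^{n+1}}
\]
for every $n\in\mathbb{N}$.

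Next I would invoke the elementary identity
\[
\frac{1}{a^{n+1}} = \frac{1}{n!}\int_{0}^{\infty} t^{n} e^{-at}\,\td t,\qquad a>0,
\]
which follows from the substitution $u=at$ in the definition of $\Gamma(n+1)=n!$. Applying it with $a=x+k$ converts the previous series into
\[
\psi^{(n)}(x) = (-1)^{n+1}\sum_{k=0}^{\infty}\int_{0}^{\infty} t^{n} e^{-(x+k)t}\,\td t.
\]

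Finally I would interchange summation and integration. Since all integrands are nonnegative, Tonelli's theorem applies at once and permits rewriting the preceding double expression as
\[
\psi^{(n)}(x) = (-1)^{n+1}\int_{0}^{\infty} t^{n} e^{-xt}\left(\sum_{k=0}^{\infty} e^{-kt}\right)\td t
= (-1)^{n+1}\int_{0}^{\infty} \frac{t^{n}}{1-e^{-t}}\,e^{-xt}\,\td t,
\]
using the geometric series $\sum_{k=0}^{\infty}e^{-kt}=\frac{1}{1-e^{-t}}$ valid for $t>0$. This is exactly the desired formula~\eqref{psin}. The only points requiring care are the termwise differentiation of the Gauss series and the subsequent interchange of sum and integral; both are routine. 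Integrability of the limiting integrand is automatic, since $\frac{t^{n}}{1-e^{-t}}\sim t^{n-1}$ as $t\to 0^{+}$ (integrable for $n\ge 1$) while $e^{-xt}$ provides exponential decay at infinity. Accordingly, I do not anticipate any serious obstacle.
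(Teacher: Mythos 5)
Your derivation is correct: the termwise differentiation of the Gauss series, the Laplace representation of $(x+k)^{-(n+1)}$ via $\Gamma(n+1)$, and the interchange of sum and integral by Tonelli are all valid, and the integrability check at $t\to0^{+}$ and $t\to\infty$ is the right thing to verify. The paper itself offers no proof of this lemma --- it is quoted directly from Abramowitz and Stegun --- so there is nothing to compare against; your argument is the standard one and fills that gap adequately.
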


\begin{lem}[\cite{abram, widder}]
Let $f_i(t)$ for $i=1,2$ be piecewise continuous in arbitrary finite intervals included in $(0,\infty)$ and suppose that there exist some constants $M_i>0$ and $c_i\ge0$ such that $\vert f_i(t)\vert\le M_ie^{c_it}$ for $i=1,2$. Then
\begin{equation}\label{convlotion}
\int_0^\infty\biggl[\int_0^tf_1(u)f_2(t-u)\td u\biggr]e^{-st}\td t
=\int_0^\infty f_1(u)e^{-su}\td u\int_0^\infty f_2(v)e^{-sv}\td v.
\end{equation}
\end{lem}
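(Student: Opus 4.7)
The plan is to prove this classical Laplace transform convolution identity by expressing the product of integrals on the right-hand side as a double integral over the first quadrant, then applying a change of variables to recover the iterated integral on the left-hand side. The exponential growth bounds $|f_i(t)|\le M_ie^{c_it}$ together with piecewise continuity ensure absolute integrability whenever $s>\max\{c_1,c_2\}$, which is precisely what legitimizes the use of Fubini's theorem.

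First I would rewrite the right-hand side of~\eqref{convlotion} as a single double integral,
\begin{equation*}
\int_0^\infty f_1(u)e^{-su}\td u\int_0^\infty f_2(v)e^{-sv}\td v
=\int_0^\infty\int_0^\infty f_1(u)f_2(v)e^{-s(u+v)}\td u\td v,
\end{equation*}
which is the product-of-integrals half of Fubini's theorem.

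Next I would perform the change of variables $(u,v)\mapsto(u,t)$ with $t=u+v$, so that $v=t-u$ and the Jacobian determinant equals $1$. This map sends the open first quadrant $\{u>0,\,v>0\}$ bijectively onto the wedge $\{0<u<t\}$, and therefore transforms the double integral into
\begin{equation*}
\int_0^\infty\biggl[\int_0^tf_1(u)f_2(t-u)\td u\biggr]e^{-st}\td t,
\end{equation*}
which is exactly the left-hand side of~\eqref{convlotion}.

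The main obstacle is supplying the measure-theoretic justification for the two applications of Fubini (the initial factorization and the interchange of order after substitution). For this I would exploit the hypothesized exponential bound, which gives $|f_1(u)f_2(v)|\le M_1M_2e^{c_1u+c_2v}$, and hence
\begin{equation*}
\int_0^\infty\int_0^\infty|f_1(u)f_2(v)|e^{-s(u+v)}\td u\td v\le\frac{M_1M_2}{(s-c_1)(s-c_2)}<\infty
\end{equation*}
for every $s>\max\{c_1,c_2\}$. This absolute integrability legitimizes both interchanges and simultaneously confirms that each side of~\eqref{convlotion} is well defined on that half-line, completing the argument.
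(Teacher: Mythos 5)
The paper does not prove this lemma at all: it is imported verbatim from the cited reference (Weisstein's MathWorld entry) and used as a black box, so there is no in-paper argument to compare against. Your proof is the standard and correct one: writing the product of the two Laplace transforms as a double integral over the first quadrant, substituting $t=u+v$ with unit Jacobian to map $\{u>0,\,v>0\}$ onto the wedge $\{0<u<t\}$, and justifying both the factorization and the reordering via the dominating bound $|f_1(u)f_2(v)|e^{-s(u+v)}\le M_1M_2e^{-(s-c_1)u}e^{-(s-c_2)v}$, which is integrable precisely when $s>\max\{c_1,c_2\}$. Your observation that the identity should be read as holding on that half-line is a point the paper's statement leaves implicit (it quantifies over no range of $s$), and it is in fact the correct reading: in the paper's application $f_1=f_2=\sigma$ with $\sigma(t)=t/(1-e^{-t})\le M e^{ct}$ for any $c>0$, so the identity holds for all $s>0$ there. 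No gaps.
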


\begin{lem}[{\cite[p.~631, Lemma~2.1]{qi-deb-evaluation.tex}}]\label{l1}
Let $f(x,t)$ is differentiable in $t$ and continuous for $(x,t)\in \mathbb{R}^2$. Then
\begin{equation}\label{eq5}
\frac \td{\td t}\int_{x_0}^tf(x,t)\td x=f(t,t)+\int_{x_0}^t\frac {\partial {f(x,t)}}{\partial t}\td x.
\end{equation}
\end{lem}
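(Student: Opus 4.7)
The plan is to derive the formula as a consequence of the multivariable chain rule, by separating the two ways in which $t$ appears in the expression $\int_{x_0}^t f(x,t)\td x$ — once as the upper limit and once inside the integrand. I would introduce the auxiliary two-variable function $F(u,v)=\int_{x_0}^u f(x,v)\td x$, so that the quantity of interest is the diagonal composition $G(t)=F(t,t)$. Once the two first-order partial derivatives of $F$ are identified, the chain rule yields $G'(t)=\partial_u F(t,t)+\partial_v F(t,t)$, and these two terms are designed to match exactly the two summands on the right-hand side of~\eqref{eq5}.

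The partial $\partial_u F$ is the easy piece: since $f(\cdot,v)$ is continuous for each fixed $v$, the fundamental theorem of calculus gives $\partial_u F(u,v)=f(u,v)$, and evaluating at $u=v=t$ produces the boundary term $f(t,t)$. The partial $\partial_v F$ is the content of Leibniz's rule for differentiating under the integral sign, and this is where the actual work lies. I would write the difference quotient as
$$\frac{F(u,v+h)-F(u,v)}{h}=\int_{x_0}^u\frac{f(x,v+h)-f(x,v)}{h}\td x,$$
apply the ordinary mean value theorem in the $t$-variable on the integrand to rewrite it as $\partial_t f(x,v+\theta(x,h)h)$ with $\theta(x,h)\in(0,1)$, and then let $h\to0$ inside the integral, the limit at each $x$ being $\partial_t f(x,v)$.

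The main obstacle is justifying that last passage to the limit under the integral sign, since the bare hypotheses (continuity of $f$ and pointwise differentiability in $t$) do not by themselves give a dominating function or uniform convergence. The clean way out, implicit in the intended applications of the lemma, is to treat $\partial_t f$ as continuous on $\mathbb{R}^2$; then on any compact rectangle $[x_0,u]\times[v-1,v+1]$ it is uniformly continuous, so the integrands $\partial_t f(x,v+\theta(x,h)h)$ converge to $\partial_t f(x,v)$ uniformly in $x$ as $h\to0$, and the bounded-interval version of the uniform convergence theorem permits the interchange. Assembling the two partials gives $G'(t)=f(t,t)+\int_{x_0}^t\partial_t f(x,t)\td x$, which is precisely~\eqref{eq5}.
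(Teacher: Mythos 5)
The paper does not actually prove this lemma: it is imported verbatim by citation from \cite{qi-deb-evaluation.tex}, so there is no in-paper proof to compare against. Your argument is the standard and correct one --- write the quantity as the diagonal value $G(t)=F(t,t)$ of $F(u,v)=\int_{x_0}^u f(x,v)\td x$, get $\partial_u F=f(u,v)$ from the fundamental theorem of calculus, get $\partial_v F=\int_{x_0}^u\partial_v f(x,v)\td x$ from Leibniz's rule, and combine via the chain rule. You are also right to flag the one genuine delicacy: as literally stated (only continuity of $f$ and pointwise differentiability in $t$), the lemma's hypotheses do not suffice to justify passing the limit of the difference quotients under the integral sign, nor to invoke the two-variable chain rule, both of which need $\partial_t f$ to be continuous (or some comparable uniformity). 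Your repair --- assume $\partial_t f$ continuous, use uniform continuity on a compact rectangle to get uniform convergence of the mean-value-theorem integrands, and note that $F$ is then $C^1$ so the chain rule applies --- is exactly the right fix, and it is harmless here since the only function to which the lemma is applied in this paper, $f(x,t)=\sigma(x)\sigma^{(j)}(t-x)$, is smooth. No gaps beyond that deliberately acknowledged strengthening of the hypotheses.
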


\begin{lem}[{\cite[Theorem~2]{Beesack-Beograd-1969}, \cite[p.~325]{Imoru-JMAA-1992}, and~\cite[p.~374]{mit}}]\label{convolution-ineq}
If $f_i$ for $1\le i\le n$ are nonnegative Lebesgue square integrable functions on $[0,a)$ for all $a>0$, then for all $n\ge2$ and $x\ge0$,
\begin{equation}\label{convolution-lower=b}
 f_1\ast\dotsm\ast f_n(x)\ge\frac{x^{n-1}}{(n-1)!} \exp\Biggl[\frac{n-1}{x^{n-1}}\int_0^x(x-u)^{n-2}\sum_{j=1}^n\ln f_j(u)\td u\Biggr],
\end{equation}
where $f_i\ast f_j(x)$ denotes the convolution $\int_0^xf_i(t)f_j(x-t)\td t$.
\end{lem}

\begin{lem}[{\cite[p.~260, 6.4.12, 6.4.13 and 6.4.14]{abram}}] \label{three-asymptotic-formula}
As $z\to\infty$ in $|\arg z|<\pi$,
\begin{align}
\psi'(z)&\sim\frac1z+\frac1{2z^2}+\frac1{6z^3}-\frac1{30z^5}+\frac1{42z^7}-\frac1{30z^9}+\dotsm,\\
\psi''(z)&\sim-\frac1{z^2}-\frac1{z^3}-\frac1{2z^4}+\frac1{6z^6}-\frac1{6z^8}+\frac3{10z^{10}} -\frac5{6z^{12}}+\dotsm,\\
\psi^{(3)}(z)&\sim\frac2{z^3}+\frac3{z^4}+\frac2{z^5}-\frac1{z^7}+\frac4{3z^9} -\frac3{z^{11}}+\frac{10}{z^{13}}-\dotsm.
\end{align}
\end{lem}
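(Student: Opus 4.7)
The plan is to derive all three asymptotic expansions from the common Laplace-type integral representation~\eqref{psin} by inserting the Maclaurin series of the amplitude and then applying Watson's lemma. First I would recall the generating-function identity
\begin{equation*}
\frac{t}{1-e^{-t}}=1+\frac{t}{2}+\sum_{k=1}^{\infty}\frac{B_{2k}}{(2k)!}t^{2k},\qquad |t|<2\pi,
\end{equation*}
which follows from the defining series $\frac{t}{e^t-1}=\sum_{k\ge0}\frac{B_k}{k!}t^k$ under the substitution $t\mapsto-t$, together with the vanishing of $B_{2k+1}$ for $k\ge1$. Multiplying by $t^{n-1}$ then furnishes the Taylor expansion of $\frac{t^n}{1-e^{-t}}$ at the origin for each $n\in\{1,2,3\}$.

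Next, for each such $n$, I would insert this expansion into~\eqref{psin} and invoke Watson's lemma, which converts any monomial $c\,t^{\alpha}$ near the origin of the amplitude into the asymptotic contribution $c\,\Gamma(\alpha+1)/z^{\alpha+1}$ in the Laplace transform. For $n=1$ the combinatorial factor $(2k)!/(2k)!$ equals $1$, and one obtains $\psi'(z)\sim\frac{1}{z}+\frac{1}{2z^2}+\sum_{k\ge1}\frac{B_{2k}}{z^{2k+1}}$; substituting $B_2=\frac16$, $B_4=-\frac{1}{30}$, $B_6=\frac{1}{42}$, $B_8=-\frac{1}{30}$ then reproduces the first displayed series. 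For $n=2$ and $n=3$ the same calculation introduces the extra combinatorial factors $(2k+1)$ and $(2k+1)(2k+2)$ respectively, and collecting terms together with the sign $(-1)^{n+1}$ from~\eqref{psin} yields the second and third expansions exactly as stated.

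To extend the conclusion from $z>0$ to the full complex sector $|\arg z|<\pi$, I would rotate the integration contour in~\eqref{psin} from the positive real axis to the ray $\arg t=-\arg z$ by Cauchy's theorem. The amplitude $\frac{t^n}{1-e^{-t}}$ is holomorphic in the strip $|\operatorname{Im}t|<2\pi$ and has at worst polynomial growth along any ray with $|\arg t|<\pi/2$, so the rotation is legitimate and Watson's lemma applies on the rotated contour, yielding the asymptotic expansions uniformly on every closed subsector $|\arg z|\le\pi-\delta$.

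The main obstacle is precisely this last contour-rotation step: one has to verify the absence of singularities of $\frac{1}{1-e^{-t}}$ in the swept region (its poles sit at $t=2\pi ik$ for $k\in\mathbb{Z}\setminus\{0\}$), close the contour at infinity by a quarter-circle estimate, and show that the Watson-lemma remainder bound stays uniform as $\arg z$ approaches $\pm\pi$. Everything else reduces to the routine bookkeeping of matching the emergent coefficients against the tabulated values of the Bernoulli numbers.
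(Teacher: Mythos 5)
The paper offers no proof of this lemma at all: it is quoted verbatim from Abramowitz and Stegun (6.4.12--6.4.14), and the accompanying remark merely notes that these are special cases of the general expansion 6.4.11 there, which is classically obtained by differentiating the Stirling series for $\ln\Gamma$. Your proposal instead derives the three expansions from scratch, using the paper's own integral representation~\eqref{psin} together with the Bernoulli generating function $\frac{t}{1-e^{-t}}=1+\frac t2+\sum_{k\ge1}\frac{B_{2k}}{(2k)!}t^{2k}$ and Watson's lemma; I checked the resulting coefficients $\frac{B_{2k}}{z^{2k+1}}$, $-\frac{(2k+1)B_{2k}}{z^{2k+2}}$, $\frac{(2k+1)(2k+2)B_{2k}}{z^{2k+3}}$ against $B_2=\frac16$, $B_4=-\frac1{30}$, $B_6=\frac1{42}$, $B_8=-\frac1{30}$, $B_{10}=\frac5{66}$, and they reproduce all displayed terms exactly, including $-\frac5{6z^{12}}$ and $\frac{10}{z^{13}}$. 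What your route buys is self-containedness and a mechanism (Watson's lemma applied to~\eqref{psin}) already implicit in the paper's toolkit; what it costs is the contour-rotation argument needed to reach the full sector $|\arg z|<\pi$, which you correctly identify as the only nontrivial step (the rotation must be done in stages so that the closing arcs stay where $\operatorname{Re}(zt)>0$, and the poles of $(1-e^{-t})^{-1}$ on the imaginary $t$-axis are approached but never crossed). Note, though, that for the only use the paper makes of this lemma --- the computation of $\lim_{x\to\infty}\phi(x)$ in the proof of Theorem~\ref{x-4-further-thm} --- the expansions are needed only for real $x\to+\infty$, so the sector extension, while needed to match the stated lemma, is dispensable for the application.
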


The formulas listed in Lemma~\ref{three-asymptotic-formula} are special cases of~\cite[p.~260, 6.4.11]{abram}.

\section{Proof of Theorem~\ref{x-4-further-thm}}\label{sec-2-x-4-di-tri-gamma-furth}

Now we are in a position to compute the completely monotonic degree of the function $\Psi(x)$.
\par
Using the integral representation~\eqref{psin} and the formula~\eqref{convlotion} gives
\begin{align*}
\Psi(x)&=\biggl[\int_{0}^{\infty}\frac{t}{1-e^{-t}}e^{-xt}\td t\biggr]^2 -\int_{0}^{\infty}\frac{t^2}{1-e^{-t}}e^{-xt}\td t\\
&=\int_{0}^{\infty}\biggl[\int_0^t\frac{s(t-s)}{(1-e^{-s})\bigl[1-e^{-(t-s)}\bigr]}\td s
-\frac{t^2}{1-e^{-t}}\biggr]e^{-xt}\td t\\
&=\int_{0}^{\infty}q(t)e^{-xt}\td t,
\end{align*}
where
\begin{equation}\label{sigma(s)=dfn}
q(t)=\int_0^t\sigma(s)\sigma(t-s)\td s-t\sigma(t)\quad \text{and}\quad 
\sigma(s)=
\begin{cases}
\dfrac{s}{1-e^{-s}},& s\ne0\\
1,& s=0.
\end{cases}
\end{equation}
\par
Direct calculations reveal
\begin{align*}
 \sigma'(s)&=1+\frac{1-s}{e^s-1}-\frac{s}{(e^s-1)^2},\quad
 \sigma''(s)=\frac{s-2}{e^s-1}+\frac{3 s-2}{(e^s-1)^2}+\frac{2 s}{(e^s-1)^3},\\
 \sigma^{(3)}(s)&=\frac{3-s}{e^s-1}+\frac{9-7 s}{(e^s-1)^2}-\frac{6 (2 s-1)}{(e^s-1)^3}-\frac{6 s}{(e^s-1)^4},\\
 \sigma^{(4)}(s)&=\frac{s-4}{e^s-1}+\frac{15 s-28}{(e^s-1)^2}+\frac{2 (25 s-24)}{(e^s-1)^3}+\frac{12 (5 s-2)}{(e^s-1)^4}+\frac{24 s}{(e^s-1)^5},\\
 \sigma^{(5)}(s)&=\frac{5-s}{e^s-1}+\frac{75-31 s}{(e^s-1)^2}-\frac{10 (18 s-25)}{(e^s-1)^3}-\frac{30 (13 s-10)}{(e^s-1)^4}-\frac{120 (3 s-1)}{(e^s-1)^5}-\frac{120 s}{(e^s-1)^6},\\
 \sigma^{(6)}(s)&=\frac{s-6}{e^s-1}+\frac{3 (21 s-62)}{(e^s-1)^2}+\frac{2 (301 s-540)}{(e^s-1)^3}+\frac{60 (35 s-39)}{(e^s-1)^4}+\frac{240 (14 s-9)}{(e^s-1)^5}+\frac{360 (7 s-2)}{(e^s-1)^6}+\frac{720 s}{(e^s-1)^7},
\end{align*}
and
\begin{gather*}
\sigma(0)=1, \quad \sigma'(0)=\frac12,\quad \sigma''(0)=\frac16, \quad \sigma^{(3)}(0)=0, \quad \sigma^{(4)}(0)=-\frac1{30},\quad \sigma^{(5)}(0)=0, \quad \sigma^{(6)}(0)=\frac1{42}.
\end{gather*}
Further differentiating consecutively brings out
\begin{align*}
 [\ln\sigma''(s)]'&=-\frac{(s-3)e^{2 s} +4s e^s +s+3}{[(s-2)e^s+s+2](e^s-1)},\\
 [\ln\sigma''(s)]''&= -\frac{e^{4s}-4(s^2-3s+4)e^{3 s}-(4 s^2-30)e^{2 s}-4(s^2+3s+4)e^s+1}{(e^s-1)^2[(s-2)e^s+s+2]^2}\\
 &\triangleq-\frac{h_1(s)}{(e^s-1)^2[(s-2)e^s+s+2]^2},\\
 h_1'(s)&=4[e^{3 s}-(3 s^2-7 s+9)e^{2 s}-(2 s^2+2s-15)e^s-s^2-5 s-7]e^s\\
 &\triangleq4h_2(s)e^s,\\
 h_2'(s)&=3 e^{3 s}- (6s^2-8 s+11)e^{2 s}-(2 s^2+6 s-13)e^s -2s-5,\\
 h_2''(s)&=9e^{3 s}-2 (6 s^2-2 s+7) e^{2 s}- (2 s^2+10 s-7)e^s-2,\\
 h_2^{(3)}(s)&=[27 e^{2s}-8 e^s (3 s^2+2 s+3)-2 s^2-14 s-3]e^s\\
 &\triangleq h_3(s)e^s,\\
 h_3'(s)&=54 e^{2 s}-8 (3 s^2+8 s+5) e^s-2 (2s+7),\\
 h_3''(s)&=4[27 e^{2 s}-2 (3 s^2+14 s+13) e^s-1],\\
 h_3^{(3)}(s)&=8 (27 e^s-3 s^2-20 s-27)e^s \\
 &>0
\end{align*}
for $s\in(0,\infty)$, and\label{h(x)-positivity-proof}
\begin{equation*}
 h_3''(0)=h_3'(0)=h_3(0)=h_2^{(3)}(0)=h_2''(0)=h_2'(0)=h_2(0)=h_1'(0)=h_1(0)=0.
\end{equation*}
This means that
\begin{gather*}
 h_3''(s)>0,\quad h_3'(s)>0,\quad h_3(s)>0,\quad h_2^{(3)}(s)>0,\\
 h_2''(s)>0,\quad h_2'(s)>0,\quad h_2(s)>0,\quad h_1'(s)>0,\quad h_1(s)>0
\end{gather*}
for $s\in(0,\infty)$.
Therefore, the derivative $[\ln\sigma''(s)]''$ is negative, that is, the function $\sigma''(s)$ is logarithmically concave, on $(0,\infty)$. Hence, for any given number $t>0$,
\begin{enumerate}
\item
the function $\sigma''(s)\sigma''(t-s)$ is also logarithmically concave with respect to $s\in(0,t)$;
\item
the function $\sigma''(s)$ is decreasing and $\sigma(s)$ is not concave on $(0,\infty)$.
\end{enumerate}
\par
By Lemma~\ref{l1} and integration-by-part, straightforward computations yield
\begin{align*}
 q'(t)&=\int_0^t\sigma(s)\sigma'(t-s)\td s+\sigma(0)\sigma(t)-[t\sigma'(t)+\sigma(t)]\\
 &=\int_0^t\sigma(s)\sigma'(t-s)\td s-t\sigma'(t),\\
 q''(t)&=\int_0^t\sigma(s)\sigma''(t-s)\td s+\sigma(t)\sigma'(0)-[\sigma'(t)+t\sigma''(t)]\\
 &=-\int_0^t\sigma(s)\frac{\td \sigma'(t-s)}{\td s}\td s+\sigma(t)\sigma'(0)-[\sigma'(t)+t\sigma''(t)]\\
 &=\int_0^t\sigma'(s)\sigma'(t-s)\td s-t\sigma''(t),\\
 q^{(3)}(t)&=\int_0^t\sigma'(s)\sigma''(t-s)\td s +\frac12\sigma'(t)-\sigma''(t)-t\sigma^{(3)}(t),\\
 q^{(4)}(t)&=\int_0^t\sigma'(s)\sigma^{(3)}(t-s)\td s+\frac16\sigma'(t) +\frac12\sigma''(t)-2\sigma^{(3)}(t)-t\sigma^{(4)}(t)\\
&=-\int_0^t\sigma'(s)\frac{\td \sigma''(t-s)}{\td s}\td s+\frac16\sigma'(t) +\frac12\sigma''(t)-2\sigma^{(3)}(t)-t\sigma^{(4)}(t)\\
&=\int_0^t\sigma''(s)\sigma''(t-s)\td s +\sigma''(t)-2\sigma^{(3)}(t)-t\sigma^{(4)}(t)\\
&=2\int_0^{t/2}\sigma''(s)\sigma''(t-s)\td s +\sigma''(t)-2\sigma^{(3)}(t)-t\sigma^{(4)}(t),
\end{align*}
and
\begin{equation*}
q(0)=q'(0)=q''(0)=0, \quad q^{(3)}(0)=\frac1{12}, \quad q^{(4)}(0)=\frac16.
\end{equation*}
\par
Applying Lemma~\ref{convolution-ineq} to $f_1=f_2=\sigma''$ and $n=2$ leads to
\begin{equation*}
\int_0^t\sigma''(s)\sigma''(t-s)\td s
\ge t \exp\biggl[\frac2t\int_0^t\ln \sigma''(u)\td u\biggr].
\end{equation*}
Hence, the validity of the inequality
\begin{equation}\label{final-suffienct-ineq}
t \exp\biggl[\frac2t\int_0^t\ln \sigma''(u)\td u\biggr] +\sigma''(t)-2\sigma^{(3)}(t)-t\sigma^{(4)}(t)>0
\end{equation}
implies the positivity of $q^{(4)}(t)$ on $(0,\infty)$.
\par
When $t\sigma^{(4)}(t)+2\sigma^{(3)}(t)-\sigma''(t)\le0$, the inequality~\eqref{final-suffienct-ineq} is clearly valid.
\par
When $t\sigma^{(4)}(t)+2\sigma^{(3)}(t)-\sigma''(t)>0$, the inequality~\eqref{final-suffienct-ineq} may be rearranged as
\begin{equation*}
\int_0^t\ln \sigma''(u)\td u
>\frac{t}2\ln\frac{t\sigma^{(4)}(t)+2\sigma^{(3)}(t)-\sigma''(t)}t.
\end{equation*}
Let
\begin{equation}
F(t)=\int_0^t\ln \sigma''(u)\td u-\frac{t}2\ln\frac{t\sigma^{(4)}(t)+2\sigma^{(3)}(t)-\sigma''(t)}t.
\end{equation}
Differentiating twice produces
\begin{equation*}
F'(t)=\ln \sigma''(t)-\frac12\ln\frac{t\sigma^{(4)}(t)+2\sigma^{(3)}(t)-\sigma''(t)}t
-\frac{t^2\sigma^{(5)}(t)+2t\sigma^{(4)}(t)-(t+2)\sigma^{(3)}(t)+\sigma''(t)} {2[t\sigma^{(4)}(t)+2\sigma^{(3)}(t)-\sigma''(t)]}
\end{equation*}
and
\begin{align*}
F''(t)&=\frac{\sigma^{(3)}(t)}{\sigma''(t)}-\frac{t^2\sigma^{(5)}(t) +2t\sigma^{(4)}(t)-(t+2)\sigma^{(3)}(t)+\sigma''(t)} {2t[t\sigma^{(4)}(t)+2\sigma^{(3)}(t)-\sigma''(t)]} -\frac1{2[t\sigma^{(4)}(t)+2\sigma^{(3)}(t)-\sigma''(t)]^2}\\
&\quad\times \bigl\{[t^2\sigma^{(6)}(t)+4t\sigma^{(5)}(t)-t\sigma^{(4)}(t)] [t\sigma^{(4)}(t)+2\sigma^{(3)}(t)-\sigma''(t)] -[t^2\sigma^{(5)}(t)+2t\sigma^{(4)}(t)\\
&\quad-(t+2)\sigma^{(3)}(t)+\sigma''(t)] [t\sigma^{(5)}(t)+3\sigma^{(4)}(t)-\sigma^{(3)}(t)]\bigr\}\\
&\triangleq\frac{Q(t)}{2t\sigma''(t)[t\sigma^{(4)}(t)+2\sigma^{(3)}(t)-\sigma''(t)]^2},
\end{align*}
where
\begin{align*}
Q(t)&=2t\sigma^{(3)}(t)\bigl[t\sigma^{(4)}(t)+2\sigma^{(3)}(t)-\sigma''(t)\bigr]^2 -\sigma''(t)\bigl[t\sigma^{(4)}(t)+2\sigma^{(3)}(t) -\sigma''(t)\bigr]\bigl[t^2\sigma^{(5)}(t) +2t\sigma^{(4)}(t)\\
&\quad-(t+2)\sigma^{(3)}(t)+\sigma''(t)\bigr]-t\sigma''(t)\bigl\{\bigl[t^2\sigma^{(6)}(t)+4t\sigma^{(5)}(t) -t\sigma^{(4)}(t)\bigr] \bigl[t\sigma^{(4)}(t)+2\sigma^{(3)}(t)-\sigma''(t)\bigr] \\
&\quad -\bigl[t^2\sigma^{(5)}(t)+2t\sigma^{(4)}(t)-(t+2)\sigma^{(3)}(t)+\sigma''(t)\bigr] \bigl[t\sigma^{(5)}(t)+3\sigma^{(4)}(t)-\sigma^{(3)}(t)\bigr]\bigr\}\\
&\triangleq \frac{e^{3 t}R(t)}{(e^t-1)^{15}}
\end{align*}
and
\begin{equation}\label{R(t)-dfn-eq}
\begin{split}
R(t)&=e^{9 t} \bigl(t^5-12 t^4+70 t^3-160 t^2+192 t-128\bigr)-e^{8 t} \bigl(16 t^7-220 t^6+1219 t^5-3220 t^4+4490 t^3\\
&\quad-3248 t^2+1152 t-768\bigr)-4 e^{7 t} \bigl(37 t^7-423 t^6+1397 t^5-1409 t^4-1020 t^3+2632 t^2-732 t\\
&\quad+456\bigr)-4 e^{6 t} \bigl(225 t^7-1281 t^6+1213 t^5+3127 t^4-4372 t^3-2648 t^2+1020 t-504\bigr)\\
&\quad-2 e^{5 t} \bigl(908 t^7-1514 t^6-6493 t^5+8710 t^4+12754 t^3-1216 t^2-1656 t+336\bigr)-2 e^{4 t} \bigl(908 t^7\\
&\quad+1710 t^6-5489 t^5-12370 t^4+594 t^3+4880 t^2+696 t+336\bigr)-4 e^{3 t} \bigl(225 t^7+1263 t^6\\
&\quad+1771 t^5-887 t^4-3208 t^3-728 t^2+12 t-168\bigr)-4 e^{2 t} \bigl(37 t^7+353 t^6+1099 t^5+1337 t^4\\
&\quad+272 t^3-632 t^2-108 t+24\bigr)-e^t \bigl(16 t^7+180 t^6+827 t^5+1864 t^4+2226 t^3+1312 t^2+240 t\\
&\quad+96\bigr)+t^5+8 t^4+30 t^3+48 t^2+48 t+32.
\end{split}
\end{equation}
Differentiating and taking the limit $t\to0$ about $76$ times respectively by the same approach as either the proof of the positivity of $\theta(t)$ in~\cite[pp.~472\nobreakdash--476]{x-4-di-tri-gamma-p(x)-Slovaca.tex}, or proofs of the absolute monotonicity of the functions $f_1,f_2,f_3$ and $h_1,h_2,h_3,h_4$ in~\cite[Theorem~1.1]{Mortici-monoburn.tex}, or the proof of the positivity of $h_1(s)$ on page~\pageref{h(x)-positivity-proof} in this paper, we may verify the positivity of $R(t)$ on $(0,\infty)$. In Section~\ref{appendix} below, we will prove the positivity of $R(t)$ on $(0,\infty)$ in details. This means that $Q(t)>0$ on $(0,\infty)$ and $F''(t)>0$. Accordingly, the derivative $F'(t)$ is strictly increasing. Because
\begin{align*}
F'(8)&=4+\frac{3 (6 e^{32}+729 e^{24}+2825 e^{16}+1483 e^8+77)}
{8 e^{32}+270 e^{24}+150 e^{16}-374 e^8-54}+\frac12\ln \frac{8(5+3 e^8)}{(e^8-1)(27+214 e^8+139 e^{16}+4 e^{24})}\\
&=-0.24428\dotsc
\end{align*}
and
\begin{align*}
F'(10)&=5+\frac{72 e^{40}+4715 e^{30}+16563 e^{20}+8241 e^{10}+409}
{19 e^{40}+440 e^{30}+186 e^{20}-568 e^{10}-77}+\frac12\ln\frac{80(3+2 e^{10})^2}
{(e^{10}-1)(77+645 e^{10}+459 e^{20}+19 e^{30})}\\
&=0.20823\dotsc,
\end{align*}
which are numerically calculated with the help of M\textsc{athematica},
the unique zero of $F'(t)$ locates on the open interval $(8,10)$. Consequently, the unique minimum of the function $F(t)$ attains on the interval $(8,10)$. Since
\begin{equation*}
F(t)=F(t_0)+(t-t_0)F'(t_0)+\frac{(t-t_0)^2}2F''(\xi)>F(t_0)+(t-t_0)F'(t_0)
\end{equation*}
for $t,t_0\in[8,10]$, where $\xi$ locates between $t_0$ and $t$, numerically calculating with the help of M\textsc{athematica} gains
\begin{align*}
2F(t)&>[F(8)+(t-8)F'(8)]+[F(10)+(t-10)F'(10)]\\
&=F(8)+F(10)-[8F'(8)+10F'(10)]+[F'(8)+F'(10)]t\\
&>\int_0^8 \ln\sigma''(u)\td u-4 \ln\frac{e^8 (27+214 e^8+139 e^{16}+4 e^{24})}{2 (e^8-1)^5}\\
&\quad+\int_0^{10} \ln\sigma''(u)\td u-5 \ln\frac{e^{10} (77+645 e^{10}+459 e^{20}+19 e^{30})}{5 (e^{10}-1)^5} -0.1281-0.0361t\\
&>\int_0^8 \ln\sigma''(u)\td u+\int_0^{10} \ln\sigma''(u)\td u+72.492 -0.1281-0.361\\
&>\int_0^8 \ln\sigma''(u)\td u+\int_0^{10} \ln\sigma''(u)\td u+72\\
&>\frac13\Biggl[\sum_{k=1}^{24}\ln\sigma''\biggl(\frac{k}3\biggr) +\sum_{k=1}^{30}\ln\sigma''\biggl(\frac{k}3\biggr)\Biggr]+72\\
&>-29-43+72\\
&=0
\end{align*}
on the interval $[8,10]$. In conclusion, the inequality~\eqref{final-suffienct-ineq} is valid and the fourth derivative $q^{(4)}(t)$ is positive on $(0,\infty)$.
\par
Integrating by parts successively results in
\begin{gather*}
x^4\Psi(x)=x^4\int_0^\infty q(t)e^{-xt}\td t
=-x^3\int_0^\infty q(t)\frac{\td e^{-xt}}{\td t}\td t
=-x^3\biggl[q(t)e^{-xt}\big|_{t=0}^{t=\infty}-\int_0^\infty q'(t)e^{-xt}\td t\biggr]\\
=x^3\int_0^\infty q'(t)e^{-xt}\td t
=x^2\int_0^\infty q''(t)e^{-xt}\td t
=x\int_0^\infty q^{(3)}(t)e^{-xt}\td t\\
=-\int_0^\infty q^{(3)}(t)\frac{\td e^{-xt}}{\td t}\td t
=-\biggl[q^{(3)}(t)e^{-xt}\big|_{t=0}^{t=\infty} -\int_0^\infty q^{(4)}(t)\frac{\td e^{-xt}}{\td t}\td t\biggr]
=\frac1{12}+\int_0^\infty q^{(4)}(t)e^{-xt}\td t.
\end{gather*}
From the positivity of $q^{(4)}(t)$ on $(0,\infty)$, it follows that the function $x^4\Psi(x)$ is completely monotonic on $(0,\infty)$. In other words,
\begin{equation}\label{degree>4}
\cmdeg{x}[\Psi(x)]\ge4.
\end{equation}
\par
Suppose that the function
\begin{equation}\label{f-alpha(x)-dfn}
f_\alpha(x)=x^\alpha \Psi(x)
\end{equation}
is completely monotonic on $(0,\infty)$. Then
\begin{equation*}
f_\alpha'(x)=x^{\alpha-1} \bigl\{\alpha \Psi(x) +x\bigl[2\psi'(x)\psi''(x)+\psi^{(3)}(x)\bigr]\bigr\}\le0
\end{equation*}
on $(0,\infty)$, that is,
\begin{equation}\label{alpha-necessary-ineq}
\alpha \le-\frac{x\bigl[2\psi'(x)\psi''(x)+\psi^{(3)}(x)\bigr]}\Psi(x) \triangleq\phi(x),\quad x>0.
\end{equation}
From Lemma~\ref{three-asymptotic-formula}, it follows
\begin{multline*}
\lim_{x\to\infty}\phi(x)=-\lim_{x\to\infty}\Biggl\{\frac{x} {\bigl[\frac1x+\frac1{2x^2}+O\bigl(\frac1{x^2}\bigr)\bigr]^2
+\bigl[-\frac1{x^2}-\frac1{x^3}+O\bigl(\frac1{x^3}\bigr)\bigr]}\\
\times\biggl[2\biggl(\frac1x+\frac1{2x^2}
+O\biggl(\frac1{x^2}\biggr)\biggr) \biggl(-\frac1{x^2}-\frac1{x^3}+O\biggl(\frac1{x^3}\biggr)\biggr)
+\biggl(\frac2{x^3}+\frac3{x^4}+O\biggl(\frac1{x^4}\biggr)\biggr)\biggr]\Biggr\}
=4.
\end{multline*}
As a result, we have
\begin{equation}\label{degree<4}
\cmdeg{x}[\Psi(x)]\le4.
\end{equation}
\par
Combining~\eqref{degree>4} with~\eqref{degree<4} yields~\eqref{degree=4}. The proof of Theorem~\ref{x-4-further-thm} is complete.

\section{Miscellanea}

In this section, we will survey the history and motivation of this topic, conjecture the completely monotonic degree of a function involving $\Psi(x)$, supply a proof for the assertion that a function $f(x)$ is strongly completely monotonic if and only if the function $xf(x)$ is completely monotonic, mention some results on the function $\sigma''(s)$, and pose an open problem on convolution of logarithmically concave functions.

\subsection{}
We first demonstrate the motivation of this paper by retrospecting the history and by introducing some known results related to Theorem~\ref{x-4-further-thm}.
\par
In~\cite[p.~208, (4.39)]{forum-alzer}, it was established that the inequality
\begin{equation}\label{psi'psi''}
\Psi(x)>\frac{p(x)}{900x^4(x+1)^{10}}
\end{equation}
holds for $x>0$, where
\begin{equation}
p(x)=75x^{10}+900x^9+4840x^8+15370x^7+31865x^6+45050x^5+44101x^4+29700x^3+13290x^2+3600x+450.
\end{equation}
\par
It is clear that the inequality
\begin{equation}\label{positivity}
\Psi(x)>0
\end{equation}
for $x>0$ is a weakened version of the inequality~\eqref{psi'psi''}. This inequality was deduced and recovered in~\cite[Theorem~2.1]{batir-interest} and~\cite[Lemma~1.1]{batir-new}. The inequality~\eqref{positivity} was also employed in~\cite[Theorem~4.3]{alzer-grinshpan},
\cite[Theorem~2.1]{batir-interest}, \cite[Theorem~2.1]{batir-new}, and~\cite[Theorem~2]{Infinite-family-Digamma.tex}. This inequality has been generalized in~\cite[Lemma~4.6]{alzer-grinshpan}, \cite[Lemma~1.2 and~Remark~1.3]{batir-jmaa-06-05-065}, and~\cite{x-4-q-di-tri-gamma.tex, AAM-Qi-09-PolyGamma.tex}. For more information about the history and background of this topic, please refer to the expository and survey articles~\cite{bounds-two-gammas.tex, Wendel-Gautschi-type-ineq-Banach.tex, Wendel2Elezovic.tex-JIA} and plenty of references therein.
\par
In~\cite{BNGuo-FQi-HMSrivastava2010.tex}, it was proved that, among all functions $\bigl[\psi^{(m)}(x)\bigr]^2+\psi^{(n)}(x)$ for $m,n\in\mathbb{N}$, only the function $\Psi(x)$ is nontrivially completely monotonic on $(0,\infty)$.
\par
In~\cite[Theorem~1]{x-4-di-tri-gamma-p(x)-Slovaca.tex} and~\cite[Theorem~2]{x-4-di-tri-gamma-upper-lower-combined.tex}, the functions
\begin{equation}\label{x-4-upper-g(x)}
\frac{x+12}{12x^4(x+1)}-\Psi(x),\quad
\Psi(x)-\frac{x^2+12}{12x^4(x+1)^2},\quad \text{and}\quad
\Psi(x)-\frac{p(x)}{900x^4(x+1)^{10}}
\end{equation}
were proved to be completely monotonic on $(0,\infty)$. From this, we obtain
\begin{equation}\label{psi'psi''-rew}
\max\biggl\{\frac{x^2+12}{12x^4(x+1)^2}, \frac{p(x)}{900x^4(x+1)^{10}}\biggr\}<\Psi(x)
<\frac{x+12}{12x^4(x+1)},\quad x>0.
\end{equation}
\par
In~\cite[Theorem~1]{x-4-di-tri-gamma-upper-improve.tex}, the function
\begin{equation}\label{tetra-square+poly-2}
h_\lambda(x)=\Psi(x)-\frac{x^2+\lambda x+12}{12x^4(x+1)^2}
\end{equation}
was proved to be completely monotonic on $(0,\infty)$ if and only if $\lambda\le0$, and so is $-h_\lambda(x)$ if and only if $\lambda\ge4$; Consequently, the double inequality
\begin{equation}\label{x-4-upper-improve-ineq}
\frac{x^2+\mu x+12}{12x^4(x+1)^2}<\Psi(x)<\frac{x^2+\nu x+12}{12x^4(x+1)^2}
\end{equation}
holds on $(0,\infty)$ if and only if $\mu\le0$ and $\nu\ge4$. The inequality~\eqref{x-4-upper-improve-ineq} refines and sharpens the right-hand side inequality in~\eqref{psi'psi''-rew}.
\par
It was remarked in~\cite[p.~137, Remark~2.7]{Wendel-Gautschi-type-ineq-Banach.tex} that a divided difference version of the inequality~\eqref{positivity} has been implicitly obtained in~\cite{Kazarinoff-56}. The divided difference form of the function $\Psi(x)$ and related functions have been investigated in the papers~\cite{notes-best-simple-open-jkms.tex, BAustMS-5984-RV.tex, notes-best-simple-cpaa.tex, Com-Mon-Di-Tri-Div-simp.tex, SCM-2012-0142.tex} and closely related references therein.
\par
In~\cite[p.~2273, Corollary~3]{Koumandos-MC-08} and~\cite{Koumandos-Lamprecht-MC-2010}, among other things, it was deduced that the functions $f_2(x)$ and $f_3(x)$ defined by~\eqref{f-alpha(x)-dfn} are completely monotonic on $(0,\infty)$. Equivalently, $\cmdeg{x}[\Psi(x)]\ge2$ and $\cmdeg{x}[\Psi(x)]\ge3$.
Motivated by these results, we naturally pose the following problems.
\begin{enumerate}
\item
Is the function $f_4(x)$ defined by~\eqref{f-alpha(x)-dfn} completely monotonic on $(0,\infty)$?
\item
Is $\alpha\le4$ the necessary and sufficient condition for the function $f_\alpha(x)$ in~\eqref{f-alpha(x)-dfn} to be completely monotonic on $(0,\infty)$?
\end{enumerate}
In other words, is the constant $4$ the completely monotonic degree of the function $\Psi(x)$ with respect to $x\in(0,\infty)$?
Theorem~\ref{x-4-further-thm} of this paper affirmatively answers these questions. Hence, Theorem~\ref{x-4-further-thm} strengthens and improves these results and inversely hints and implies that the main results in~\cite{Koumandos-MC-08, Koumandos-Lamprecht-MC-2010} may be further improved, developed, or amended.
\par
Till now, we may further conjecture that the completely monotonic degrees with respect to $x\in(0,\infty)$ of the functions $h_\lambda(x)$ and $-h_\mu(x)$ defined by~\eqref{tetra-square+poly-2} are $4$ if and only if $\lambda\le0$ and $\mu\ge4$. Namely,
\begin{equation}
 \cmdeg{x}[h_\lambda(x)]=\cmdeg{x}[-h_\mu(x)]=4
\end{equation}
if and only if $\lambda\le0$ and $\mu\ge4$.

\subsection{}

Recall from~\cite{Trimble-Wells-Wright} that a function $f$ is said to be strongly completely monotonic on $(0,\infty)$ if it has derivatives of all orders and
$
(-1)^nx^{n+1}f^{(n)}(x)
$
is nonnegative and decreasing on $(0,\infty)$ for all $n\ge0$.
\par
Proposition~1.1 in~\cite[p.~34]{Koumandos-Pedersen-09-JMAA} reads that a function $f(x)$ is strongly completely monotonic if and only if the function $xf(x)$ is completely monotonic. In other words, the set of completely monotonic functions on $(0,\infty)$ of degree not less than $1$ with respect to $x$ coincides with the set of strongly completely monotonic functions on $(0,\infty)$.
\par
Because not finding a proof for~\cite[p.~34, Proposition~1.1]{Koumandos-Pedersen-09-JMAA}, we now provide a proof for it as follows.
If $xf(x)$ is completely monotonic on $(0,\infty)$, then
\begin{equation*}
(-1)^k[xf(x)]^{(k)}=(-1)^k\bigl[xf^{(k)}(x)+kf^{(k-1)}(x)\bigr]
=\frac{(-1)^kx^{k+1}f^{(k)}(x)-k[(-1)^{k-1}x^kf^{(k-1)}(x)]}{x^k}
\ge0
\end{equation*}
on $(0,\infty)$ for all integers $k\ge0$. From this and by induction, we obtain
\begin{gather*}
(-1)^kx^{k+1}f^{(k)}(x)\ge k[(-1)^{k-1}x^kf^{(k-1)}(x)]
\ge k(k-1)[(-1)^{k-2}x^{k-1}f^{(k-2)}(x)]\ge\\
\dotsm\ge [k(k-1)\dotsm4\cdot3]x^3f''(x)
\ge [k(k-1)\dotsm4\cdot3\cdot2]x^2f'(x)\ge k!xf(x)\ge0
\end{gather*}
on $(0,\infty)$ for all integers $k\ge0$. So, the function $f(x)$ is strongly completely monotonic on $(0,\infty)$.
\par
Conversely, if $f(x)$ is a strongly completely monotonic function on $(0,\infty)$, then
\begin{equation*}
 (-1)^kx^{k+1}f^{(k)}(x)\ge0
\end{equation*}
and
\begin{equation*}
\bigl[(-1)^kx^{k+1}f^{(k)}(x)\bigr]'=\frac{(k+1)\bigl[(-1)^kx^{k+1}f^{(k)}(x)\bigr] -(-1)^{k+1}x^{k+2}f^{(k+1)}(x)}x
\le0
\end{equation*}
hold on $(0,\infty)$ for all integers $k\ge0$. Hence, it follows that $xf(x)\ge0$ and $(-1)^{k+1}[xf(x)]^{(k+1)}$ on $(0,\infty)$ for all integers $k\ge0$. As a result, the function $xf(x)$ is completely monotonic on $(0,\infty)$. The proof of~\cite[p.~34, Proposition~1.1]{Koumandos-Pedersen-09-JMAA} is complete.

\subsection{}
The function $\sigma(s)$ defined in~\eqref{sigma(s)=dfn} is a special case of the function
\begin{equation*}
g_{a,b}(s)=
\begin{cases}
 \dfrac{s}{b^{s}-a^{s}},&s\ne0,\\
 \dfrac1{\ln b-\ln a},&s=0,
\end{cases}
\end{equation*}
where $a,b$ are positive numbers and $a\ne b$, and some special cases of the function $g_{a,b}(s)$ and their reciprocals have been investigated and applied in many papers such as~\cite{emv-log-convex-simple.tex, ijmest-bernoulli, mon-element-exp-AIMS.tex, Guo-Qi-Filomat-2011-May-12.tex, TamsuiOxf.J.Math.Sci.(Mar13-07).tex, MMasjedJamei-FengQi-HMSrivastava2008.tex, emv-rs-schur-rocky, 1st-Sirling-Number-2012.tex, pams-62, Bessel-ineq-Dgree-CM.tex, QiBerg.tex, Cheung-Qi-Rev.tex, steffensen-pair-Anal, onsp, Extended-Binet-remiander-comp.tex-Slovaca, simp-exp-degree-ext.tex, frac(bx-ax)x, best-constant-one-simple-real.tex}. This subject was surveyed in~\cite{Qi-Springer-2012-Srivastava.tex}.
Recently, it was discovered that the derivatives of the function $\frac{\sigma(s)}s=\frac1{1-e^{-s}}$ have something to do with Stirling numbers of the first and second kinds in combinatorics and number theory. For detailed information, please refer to~\cite[p.~559]{GKP-Concrete-Math} and~\cite{Eight-Identy-More.tex, exp-derivative-sum-Combined.tex, Exp-Diff-Ratio-Wei-Guo.tex, CAM-D-13-01430-Xu-Cen}.
\par
On logarithmically concave functions, we may establish the following proposition.

\begin{prop}\label{log-concave-mon-lem}
If $f(x)$ is differentiable and logarithmically concave \textup{(}or logarithmically convex respectively\textup{)} on $(-\infty,\infty)$, then the product $f(x)f(\lambda-x)$ for any fixed number $\lambda\in\mathbb{R}$ is increasing \textup{(}or decreasing respectively\textup{)} with respect to $x\in\bigl(-\infty,\frac{\lambda}2\bigr)$ and decreasing \textup{(}or increasing respectively\textup{)} with respect to $x\in\bigl(\frac{\lambda}2,\infty\bigr)$.
\end{prop}

\begin{proof}
Taking the logarithm of $f(x)f(\lambda-x)$ and differentiating give
$$
\{\ln[f(x)f(\lambda-x)]\}'=\frac{f'(x)}{f(x)}-\frac{f'(\lambda-x)}{f(\lambda-x)}.
$$
In virtue of the logarithmic concavity of $f(x)$, it follows that the function $\frac{f'(x)}{f(x)}$ is decreasing and $\frac{f'(\lambda-x)}{f(\lambda-x)}$ is increasing on $(-\infty,\infty)$. From the obvious fact that $\{\ln[f(x)f(\lambda-x)]\}'|_{x=\lambda/2}=0$, it is deduced that $\{\ln[f(x)f(\lambda-x)]\}'<0$ for $x>\frac\lambda2$ and $\{\ln[f(x)f(\lambda-x)]\}'>0$ for $x<\frac\lambda2$. Hence, the function $f(x)f(\lambda-x)$ is decreasing for $x>\frac\lambda2$ and increasing for $x<\frac\lambda2$.
\par
For the case of $f(x)$ being logarithmically convex, it may be proved similarly.
\end{proof}

The techniques in the proof of Proposition~\ref{log-concave-mon-lem} was ever utilized in the papers~\cite{MIA-1729.tex, Cheung-Qi-Rev.tex, notes-best.tex, Gini-Convex-t.tex} and closely related references therein.
\par
By Proposition~\ref{log-concave-mon-lem}, it may be deduced that the function $\sigma''(s)\sigma''(t-s)$ is increasing with respect to $s\in\bigl(0,\frac{t}2\bigr)$ and decreasing with respect to $s\in\bigl(\frac{t}2,t\bigr)$.

\subsection{}
Motivated by Lemma~\ref{convolution-ineq}, the proof of Theorem~\ref{x-4-further-thm}, and Proposition~\ref{log-concave-mon-lem}, we pose the following open problem.

\begin{open}
When $f_i$ for $1\le i\le n$ are all logarithmically concave on $[0,a)$ for all $a>0$, can one find a stronger lower bound than the one in~\eqref{convolution-lower=b} for the convolution $f_1\ast f_2\ast\dotsm\ast f_n(x)$?
\end{open}

\section{Appendix}\label{appendix}
In this section, we would like to spend a lot of texts to detail the proof for the positivity of the function $R(t)$ defined by~\eqref{R(t)-dfn-eq}.

\begin{prop}\label{prop-exp-am}
The function $R(t)$ defined by~\eqref{R(t)-dfn-eq} and all of its derivatives are positive on $(0,\infty)$. In other words, the function $R(t)$ is absolutely monotonic on $(0,\infty)$, or say, the function $R(-t)$ is completely monotonic on $(-\infty,0)$.
\end{prop}

\begin{proof}
With the help of M\textsc{athematica}, consecutive differentiation and straightforward simplification give

and
\begin{gather*}
R^{(k)}(0)=0,\quad 0\le k\le6; \quad R_1^{(k)}(0)=0,\quad 0\le k\le8;\quad 
R_2(0)=0, \quad
R_2'(0)=94594500, \\ 
R_2''(0)=5675670000,\quad R_2^{(3)}(0)=186994407600, \quad
R_2^{(4)}(0)=4870671886080, \\ 
R_2^{(5)}(0)=115977230617248, \quad R_2^{(6)}(0)=2568045873032640, \quad R_2^{(7)}(0)=51468046246929312,\\
R_2^{(8)}(0)=919261925834668416; \quad R_3(0)=2127921124617288,\quad
R_3'(0)=31808262868948566,\\ 
R_3''(0)=422643933990433848,\quad
R_3^{(3)}(0)=5059127238393038844,\quad R_3^{(4)}(0)=55311317976132200928,\\
R_3^{(5)}(0)=559358124246419710080,\quad R_3^{(6)}(0)=5291254778485384410240,\\
R_3^{(7)}(0)=47275711643630010675648,\quad R_3^{(8)}(0)=402327284442724791735744;\\
R_4(0)=25145455277670299483484, \quad R_4'(0)=180175476735597623990958, \\ R_4''(0)=1233364679292092331540240, \quad R_4^{(3)}(0)=8128711084294241029973952, \\ R_4^{(4)}(0)=51948114141680687652017184, \quad  R_4^{(5)}(0)=323999742368987506576507680, \\ R_4^{(6)}(0)=1983712454318091617900255520, \quad  R_4^{(7)}(0)=11984168286345820788017615520, \\ R_4^{(8)}(0)=71752827530169076522475337120;\quad
R_5(0)=448455172063556728265470857, \\ R_5'(0)=2222062109238254700527363412, \quad R_5''(0)=10966377777101822969342094780, \\ R_5^{(3)}(0)=54073563048190450871500371120, \quad R_5^{(4)}(0)=266769748534576065247741326480, \\ R_5^{(5)}(0)=1316887765697877890640258458880, \quad R_5^{(6)}(0)=6499339666080758476651443438720, \\ R_5^{(7)}(0)=32030330270749184706488159641920, \quad R_5^{(8)}(0)=157419994070058290561168403698880;\\
R_6(0)=24993013334803239242170973,\quad  R_6'(0)=97363392902732249529746070,\\ R_6''(0)=376439498975614717003617884,\quad  R_6^{(3)}(0)=1442729528728439890600508052,\\ R_6^{(4)}(0)=5477808342361637707363220348,\quad  R_6^{(5)}(0)=20602476681796080169865157140,\\ R_6^{(6)}(0)=76776923758516348244316687404,\quad  R_6^{(7)}(0)=283621637079942700292207261572,\\ R_6^{(8)}(0)=1039178314812864090945470799068;\quad
R_7(0)=259794578703216022736367699767,\\ R_7'(0)=684880850316419203514174536198,\quad R_7''(0)=1781525699945699167445177663968,\\ R_7^{(3)}(0)=4578635505175348952464533634608,\quad R_7^{(4)}(0)=11640343378188197428722427451648,\\ R_7^{(5)}(0)=29304071938148606553453533106688,\quad R_7^{(6)}(0)=73113605516101190505198264803328,\\ R_7^{(7)}(0)=180921493369266920211167500832768,\quad R_7^{(8)}(0)=444290887449456146125411745425408;\\
R_8(0)=4427324691580199160210177629,\quad R_8'(0)=6367820885649279115456587820,\\ R_8''(0)=8983950893934449574356812800,\quad R_8^{(3)}(0)=12452006779921850992338297600,\\ R_8^{(4)}(0)=16978056111501152993675481600,\quad R_8^{(5)}(0)=22800344803799642868441945600,\\ R_8^{(6)}(0)=30191839866409652462819289600,\quad R_8^{(7)}(0)=39462797393404173379462233600,\\ R_8^{(8)}(0)=50963300758293091764469977600.
\end{gather*}
The proof of Proposition~\ref{prop-exp-am} is thus completed.
\end{proof}

\begin{rem}
This is a corrected and extended version of the preprint~\cite{degree-4-tri-tetra-gamma.tex}.
\end{rem}

\end{document}